\newtheorem{theorem}{Theorem}
\newtheorem{corollary}[theorem]{Corollary}
\newtheorem{lemma}[theorem]{Lemma}
\theoremstyle{definition}
\newtheorem{example}[theorem]{Example}
\providecommand{\mk}{\cellcolor[gray]{.70}}
\def\eref#1{$(\ref{#1})$}
\def\egref#1{Example~$\ref{#1}$}
\def\lref#1{Lemma~$\ref{#1}$}
\def\tref#1{Theorem~$\ref{#1}$}
\def\cref#1{Corollary~$\ref{#1}$}
\renewcommand{\geq}{\geqslant}
\renewcommand{\leq}{\leqslant}
\renewcommand{\ge}{\geqslant}
\renewcommand{\le}{\leqslant}
\def\<{\big\langle}
\def\>{\big\rangle}
\newcommand{\cset}{B}
\newcommand{\ncset}{\overline{\cset}}
\begin{document}

\title{\textbf{\Large{Trades in complex Hadamard matrices}}}

\author{
\textsc{Padraig \'O Cath\'ain}
				\thanks{\textit{E-mail: p.ocathain@gmail.com}}\\
\textsc{Ian M. Wanless}
				\thanks{\textit{E-mail: ian.wanless@monash.edu}}\\
\textit{\footnotesize{School of Mathematical Sciences,}}\\
\textit{\footnotesize{Monash University, VIC 3800, Australia.}}\\
}

\footnotetext{
This work was inspired by the discussion after Will Orrick's
talk at the ADTHM'14 workshop, and much of the work was undertaken
at the workshop. The authors are grateful to the workshop organisers
and to BIRS. Research supported by ARC grants FT110100065 and
DP120103067. This is the final form of this work.
No other version has been or will be submitted elsewhere.
}
\maketitle
\begin{center}
\begin{abstract}
\noindent
A {\em trade} in a complex Hadamard matrix is a set of entries which
  can be changed to obtain a different complex Hadamard matrix. We
  show that in a real Hadamard matrix of order $n$ all trades contain
  at least $n$ entries.  We call a trade {\em rectangular} if it
  consists of a submatrix that can be multiplied by some scalar
  $c\ne1$ to obtain another complex Hadamard matrix.  We give a
  characterisation of rectangular trades in complex Hadamard
  matrices of order $n$ and show that they all contain at least $n$
  entries. We conjecture that all trades in complex Hadamard matrices
  contain at least $n$ entries.
\end{abstract}

\end{center}

\vspace{0.3cm}

\noindent
{\bf 2010 Mathematics Subject classification:} 05B20, 15B34

\noindent
{\bf Keywords:} Hadamard matrix, trade, rank

\clearpage

\section{Introduction}

A \textit{complex Hadamard matrix} of order $n$ is an $n \times n$
complex matrix with unimodular entries which satisfies the matrix equation
\[ HH^{\dagger} = nI_{n},\] where $H^\dagger$ is the conjugate
transpose of $H$ and $I_n$ is the $n\times n$ identity matrix.  If the
entries are real (hence $\pm1$) the matrix is {\em Hadamard}. The
notion of a trade is well known in the study of $t$-designs and Latin
squares \cite{Billington}.  For a complex Hadamard matrix we define a
{\em trade} to be a set of entries which can be altered to obtain a
different complex Hadamard matrix of the same order.  In other words,
a set $T$ of entries in a complex Hadamard matrix $H$ is a trade if
there exists another complex Hadamard matrix $H'$ such that $H$ and
$H'$ disagree on every entry in $T$ but agree otherwise.  If $H$ is a
real Hadamard matrix, we insist that $H'$ is also real.

\begin{example}\label{eg:Pay8}
The 8 shaded entries in the Paley Hadamard matrix below form a trade.
\[
\left(
\begin{array}{cccccccc}
\mk+&\mk+&+&+&+&+&+&+\\
+&-&-&-&+&-&+&+\\
+&+&-&\mk-&\mk-&+&-&+\\
+&+&+&\mk-&\mk-&-&+&-\\
+&-&+&+&-&-&-&+\\
\mk+&\mk+&-&+&+&-&-&-\\
+&-&+&-&+&+&-&-\\
+&-&-&+&-&+&+&-\\
\end{array}
\right)
\]
If each of the shaded entries is replaced by its negative, the result is
another Hadamard matrix.
\end{example}

We use the word {\em switch} to describe the process of replacing a trade
by a new set of entries (which must themselves form a trade).
In keeping with the precedent from design theory, our trades
are simply a set of entries that can be switched.
Information about what they can be switched to
does not form part of the trade (although it may be helpful in
order to see that something is a trade). For real Hadamard matrices
there can only be one way to switch a given trade, since only two
symbols are allowed in the matrices and switching must change every
entry in a trade. However, for complex Hadamard matrices there can be
more than one way to switch a given trade, as our next example shows.

\begin{example}
  Let $u$ be a nontrivial third root of unity.  The following matrix
  is a $7 \times 7$ complex Hadamard matrix.  The shaded entries again
  form a trade; they can be multiplied by an arbitrary complex number
  $c$ of modulus $1$ to obtain another complex Hadamard matrix.
  This matrix is due originally to Petrescu \cite{Petrescu},
  and is available in the online database \cite{Karol}.
\def\sq{\rlap{$^2$}}
\def\phm{\phantom{-}}

\addtolength\arraycolsep{3pt}
\[\left(
\begin{array}{ccccccc}
1     & \phm 1          & \phm 1        & \phm 1    & \phm 1    & \phm 1   & \phm 1    \\
1     & \mk -u        & \mk \phm u        &-u\sq & -1    &-1   & -u    \\
1     & \mk \phm u         & \mk -u      & -1   & -u\sq    &-1   & -u    \\
1     & -u\sq    &-1       &\mk \phm u    &\mk -u    & -u   &-1    \\
1     &-1         &-u\sq   &\mk -u    &\mk \phm u    &-u   &-1    \\
1     &-1         &-1       &-u    &-u    &\phm u   &-u\sq    \\
1     &-u         &-u        &-1    &-1    &-u\sq   &\phm u    \\
\end{array} \right)
\]
\end{example}

The {\em size} of a trade is the number of entries in it.  We say that
a trade is {\em rectangular} if the entries in the trade form a
submatrix that can be switched by multiplying all entries in the trade
by some complex number $c\ne1$ of unit modulus.
It will follow from \lref{l:rect} that the value of $c$ is immaterial;
if one value works then they will all work.
In a complex Hadamard
matrix each row and column is a rectangular trade.
Thus there are always $1\times n$ and $n\times 1$ rectangular trades.
Similarly, we may exchange any pair of rows to obtain another
complex Hadamard matrix. In the real case,
the rows that we exchange necessarily differ in
exactly half the columns, so this reveals a $2\times\frac{n}{2}$
rectangular trade (and similarly there are always $\frac{n}{2}\times2$
rectangular trades in real Hadamard matrices). Less trivial trades
were used by Orrick \cite{Orr08} to generate many inequivalent
Hadamard matrices of orders 32 and 36. The smaller of Orrick's two
types of trades was a $4\times\frac{n}{4}$ rectangular trade that he
called a ``closed quadruple''. Closed quadruples are often but not
always present in Hadamard matrices. The trades just discussed all
have size equal to the order $n$ of the host matrix.  The trade in
\egref{eg:Pay8} is a non-rectangular example with the same property.

Trades in real Hadamard matrices and related codes and designs have
been studied occasionally  in the literature, either to produce
invariants to aid with classification or to produce many inequivalent
Hadamard matrices. See \cite{Orr08} and the references cited there. In
the complex case, trades are related to parameterising complex
Hadamard matrices, some computational and theoretical results are
surveyed in \cite{Tadej}.

Throughout this note we will assume that $H=[h_{ij}]$ is a complex
Hadamard matrix of order $n$. We will use $r_i$ and $c_j$ to denote the
$i$-th row and $j$-th column of $H$ respectively. If $B$ is a set of
columns then $r_{i,B}$ denotes the row vector which is equal to
$r_{i}$ on the coordinates $B$ and zero elsewhere. We use
$\overline{B}$ for the complement of the set $B$.


\section{Hadamard Trades}

We start with a basic property of trades. We use
$\<\cdot\,,\cdot\>$ for the standard Hermitian inner product under
which rows of a complex Hadamard matrix are orthogonal.

\begin{lemma}\label{l:rect}
  Let $T$ be a subset of the entries of a complex Hadamard matrix $H$.
  Let $c\ne1$ be a complex number of unit modulus.
\begin{enumerate}
\item
  Suppose that $T$ can be switched by multiplying its entries by $c$.
  Let $\cset$ be the set of columns in which
  row $r_{i}$ of $H$ contains elements of $T$.
  If $r_{j}$ is a row of $H$ that contains no
  elements of $T$ then $r_{i,\cset}$ is orthogonal to $r_{j,\cset}$.
\item
  Suppose that $T$ forms a rectangular submatrix of $H$ with rows $A$
  and columns $\cset$. Then $T$ can be switched by multiplying its
  entries by $c$ if and only if $r_{i,\cset}$ is orthogonal to
  $r_{j,\cset}$ for every $r_i\in A$ and $r_j\notin A$.
\end{enumerate}
\end{lemma}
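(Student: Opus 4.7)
The proof strategy is to exploit the Hermitian orthogonality of rows of $H$ and $H'$ and split inner products over the two column sets $\cset$ and $\ncset$.

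For part (1), I would write down the orthogonality relation $\<r_i,r_j\>=0$ in $H$, then split the sum at $\cset$ to obtain
\[
\<r_{i,\cset},r_{j,\cset}\>+\<r_{i,\ncset},r_{j,\ncset}\>=0.
\]
In $H'$ the row $r_j$ is unchanged by hypothesis, whereas the entries of $r_i$ lying in $\cset$ get multiplied by $c$. Hence the same orthogonality in $H'$ reads
\[
\bar c\,\<r_{i,\cset},r_{j,\cset}\>+\<r_{i,\ncset},r_{j,\ncset}\>=0.
\]
Subtracting the two equations gives $(\bar c-1)\<r_{i,\cset},r_{j,\cset}\>=0$. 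Since $|c|=1$ and $c\ne1$ imply $\bar c\ne1$, the claimed orthogonality follows. This also justifies the remark that the particular choice of $c$ is immaterial, since the derived constraint is independent of $c$.

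For part (2), the forward direction is immediate from (1): when $T$ is rectangular with row set $A$, every row $r_j$ with $j\notin A$ contains no elements of $T$, so the hypothesis of (1) applies and yields the stated orthogonality. For the converse, I would construct $H'$ by multiplying the entries of $T$ by $c$ (which preserves unimodularity), and verify $H'H'^\dagger=nI_n$ by checking $\<r'_i,r'_j\>$ for each pair of rows in three cases. When $i,j\in A$, both rows are rescaled by $c$ on $\cset$, so $|c|^2=1$ makes the inner product coincide with $\<r_i,r_j\>$. When $i,j\notin A$ both rows are unchanged. The only substantive case is $i\in A$, $j\notin A$: using $\<r_i,r_j\>=0$ together with the new hypothesis $\<r_{i,\cset},r_{j,\cset}\>=0$ forces $\<r_{i,\ncset},r_{j,\ncset}\>=0$ as well, so the rescaling by $\bar c$ of the $\cset$ part leaves
\[
\<r'_i,r'_j\>=\bar c\,\<r_{i,\cset},r_{j,\cset}\>+\<r_{i,\ncset},r_{j,\ncset}\>=0.
\]
Finally, $H'\ne H$ because $c\ne1$ and $T$ is nonempty, so $T$ really has been switched.

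There is no real obstacle here beyond bookkeeping: the argument is essentially the linearity of the Hermitian form together with the simple algebraic fact that $(\bar c-1)$ is invertible when $c\ne1$. The only point that needs care is the direction of the complex conjugate in $\<\cdot,\cdot\>$; once that convention is fixed, the two cases in (2) involving a mixed pair $(i\in A,\,j\notin A)$ both rely on the same split and cancel cleanly.
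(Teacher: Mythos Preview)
Your proof is correct and follows essentially the same approach as the paper: split the Hermitian inner product over $\cset$ and $\ncset$, compare the orthogonality relations in $H$ and in the switched matrix, and use $c\ne1$ to cancel. The only cosmetic difference is the choice of which argument of $\langle\cdot,\cdot\rangle$ is conjugate-linear (the paper pulls out $c$ rather than $\bar c$), which you already flagged and which does not affect the argument.
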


\begin{proof}
First, since the rows of $H$ are orthogonal, we have that
\[ 0= \<r_{i},r_{j}\>
=\<r_{i,\cset},r_{j,\cset}\>+\<r_{i,\overline{\cset}},r_{j,\overline{\cset}}\>.
\]
Now, multiplying the entries in $T$ by $c$, we see that
\[ 0= \<cr_{i,\cset},r_{j,\cset}\>+\<r_{i,\overline{\cset}},r_{j,\overline{\cset}}\>
= c\<r_{i,\cset},r_{j,\cset}\>+\<r_{i,\overline{\cset}},r_{j,\overline{\cset}}\>. \]
Subtracting, we find that $(c-1)\<r_{i,\cset},r_{j,\cset}\> =0$.
Given that $c\neq1$ the first claim of the Lemma follows.

We have just shown the necessity of the condition in the second claim.
To check sufficiency we note that the above argument is reversible and
shows that $r_{i}\in A$ and $r_{j}\notin A$ will be orthogonal after
multiplication of the entries of $T$ by $c$.  So we just have to
verify that any two rows $r_{i}, r_{k}$ in $A$ will be orthogonal.
This follows from
\[
\<cr_{i,\cset},cr_{k,\cset}\>+\<r_{i,\overline{\cset}},r_{k,\overline{\cset}}\>
=|c|\<r_{i, \cset},r_{k,\cset}\>+\<r_{i,\overline{\cset}},r_{k,\overline{\cset}}\>
=\<r_{i, \cset},r_{k,\cset}\>+\<r_{i,\overline{\cset}},r_{k,\overline{\cset}}\>
= 0. \]
\vskip-10pt
\hfill\hfill\qedhere\end{proof}

Note that the value of $c$ plays no role in \lref{l:rect}. Also,
Part 1 of the lemma implies that in a real Hadamard matrix any
trade which does not intersect every row must use an even number
of entries from each row. The same is not true for trades in
complex Hadamard matrices (see \cite{Karol} for examples).

It is of interest to consider the size of a smallest possible
trade. For (real) Hadamard matrices of order $n$ we show that
arbitrary trades have size at least $n$. Equality is achievable in a
variety of ways, as discussed above. However, we find a restriction
that must be obeyed by any trade achieving equality.  Then we show
that in the general case rectangular trades have size at least
$n$. The question for arbitrary trades in complex Hadamard matrices
remains open.

\begin{theorem}\label{t:real}
Let $H$ be a (real) Hadamard matrix of order $n$.
Any trade in $H$ has size at least $n$. If $T$ is any trade of
size $n$ in $H$ then there are divisors $d$ and $e$ of $n$ such that
$T$ contains either $0$ or $d$ entries in each row of $H$ and
either $0$ or $e$ entries in each column of $H$.
Moreover, $d$ is even or $d=1$. Likewise, $e$ is even or $e=1$.
\end{theorem}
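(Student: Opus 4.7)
The plan is to associate an orthogonal matrix to the pair $(H,H')$, where $H'$ is the matrix obtained from $H$ by switching the trade $T$, and extract both the size bound and the structural constraints from its block structure. Set $P = \tfrac{1}{n} H H'^{T}$. Since $HH^T = H'H'^T = nI$, one checks that $PP^T = I$, so $P$ is orthogonal. Let $A$ be the set of indices of rows of $H$ that meet $T$, set $a = |A|$, and write $a_i = |T_i|$. For each $j \notin A$, row $j$ of $H'$ equals row $j$ of $H$, so the $j$-th column of $P$ is the standard basis vector $e_j$; orthogonality of $P$ then forces every other column of $P$ to vanish in coordinate $j$. Consequently the columns of $P$ indexed by $A$ are supported on the rows indexed by $A$, and $Q := nP_{A,A}$ is an $a \times a$ integer matrix with $QQ^T = n^2 I_a$.

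A direct computation yields $Q_{ii} = n - 2a_i$ and, for distinct $i,j \in A$, $Q_{ij} = -2X_{ij}$ with $X_{ij} = \sum_{k \in T_j} H_{ik} H_{jk}$, whence $|Q_{ij}| \leq 2 a_j$. Equating $\|Q_{\cdot,j}\|^2 = n^2$ produces
\[ \sum_{i \in A,\, i \neq j} Q_{ij}^2 \;=\; n^2 - (n-2a_j)^2 \;=\; 4 a_j(n - a_j), \]
while the pointwise bound gives $\sum_{i \in A,\, i\neq j} Q_{ij}^2 \leq 4(a-1) a_j^2$. Combining these yields $a_j \geq n/a$ for every $j \in A$, and summing over $j$ produces $|T| = \sum_{j \in A} a_j \geq n$, the required lower bound.

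For the structural part, if $|T| = n$ then the chain of inequalities above is tight, so $a_j = n/a =: d$ for every $j \in A$; hence $d$ is an integer divisor of $n$ and each row meets $T$ in either $0$ or $d$ entries. Running the identical argument on $H^T$ (which is Hadamard, with the transpose of $T$ as its trade) provides the analogous divisor $e$ for column intersections. For the parity claim, if $a = n$ then $\sum_i a_i = n$ with each $a_i \geq 1$ forces $d = 1$. Otherwise a trade-free row $j$ exists, and part 1 of \lref{l:rect} applied with $B = T_i$ gives $\sum_{k \in T_i} H_{ik} H_{jk} = 0$ for each $i \in A$; this is a sum of $d$ terms each equal to $\pm 1$, so $d$ must be even. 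The corresponding statement for $e$ follows from the same dichotomy applied to $H^T$. The one subtle step is the block decomposition of $P$: rather than verifying $P_{\bar A, A} = 0$ by hand, one should recognise that it is forced by the orthogonality of $P$ once the $\bar A$-columns are identified as standard basis vectors, after which the rest of the proof is an extremal norm computation.
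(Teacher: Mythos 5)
Your proof is correct, but it takes a genuinely different route from the one in the paper. The paper normalises $H$, lets $d$ be the minimum number of trade entries in a nonempty row, and looks at the submatrix $S$ formed by those $d$ columns and the trade-free rows: \lref{l:rect} forces each row of $S$ to sum to zero, a pigeonhole count of negative entries combined with orthogonality to the (all-ones) first column then forces equality throughout, giving $n=rd$ and the row structure in one stroke; evenness of $d$ falls out of the same orthogonality. You instead package the pair $(H,H')$ into the orthogonal transition matrix $P=\tfrac1n HH'^{T}$, observe that the trade-free rows contribute standard basis vectors so that $Q=nP_{A,A}$ is an integer matrix with orthogonal columns of norm $n$, and run an extremal computation comparing the exact column norm $4a_j(n-a_j)+(n-2a_j)^2=n^2$ against the entrywise bound $|Q_{ij}|\le 2a_j$ to get $a_j\ge n/a$ for every row simultaneously. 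All the steps check out: the block structure of $P$, the identities $Q_{jj}=n-2a_j$ and $Q_{ij}=-2\sum_{k\in T_j}H_{ik}H_{jk}$, the equality analysis giving $a_j=n/a=d$, the transpose argument for $e$, and the reduction of the parity claim to Part 1 of \lref{l:rect} (with the $a=n$ case giving $d=1$). Your approach buys symmetry — you never need to single out a minimal row, never need to normalise, and the per-row equality in the tight case is immediate rather than extracted from a pigeonhole argument — at the cost of introducing the transition matrix; the paper's argument is more elementary and gets the evenness of $d$ for free inside the main computation rather than as a separate case analysis at the end.
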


\begin{proof}
  Suppose that $H$ differs from a Hadamard matrix $H'$ in a trade $T$
  of at most $n$ entries.  Without loss of generality, we assume that
  $H$ is normalised, that the first row of $H$ contains
  $d$ differences between $H$ and $H'$, and
  that these differences occur in the first $d$ columns. We also assume
  that all differences between $H$ and $H'$ occur in the first $r$
  rows, with each of those rows having at least $d$ differences in them.
  The case $r = n$ is trivial, so we assume that $r < n$
  in the remainder of the proof.
  By assumption there are at least $rd$ entries in $T$, so $rd\le n$.
  Now consider the submatrix $S$ of $H$ formed by the first
  $d$ columns and the last $n-r$ rows. By \lref{l:rect}, we know that
  each row of $S$ is orthogonal to the all ones vector.  It follows
  that $d$ is even and $S$ contains $(n-r)d/2$ negative entries. The
  first column of $S$ consists entirely of ones so, by the pigeon-hole
  principle, some other column of $S$ must contain at least
\begin{equation}\label{e:negent}
\frac{(n-r)d}{2(d-1)} \le \frac{nd-n}{2(d-1)} = \frac{n}{2}
\end{equation}
  negative entries. This column of $H$ is orthogonal to the
  first column, so we must have equality in \eref{e:negent}.
  It follows that $n=rd$ and each of the first $r$ rows contain
  exactly $d$ entries in $T$. Columns have similar properties,
  by symmetry.
\hfill\hfill\qedhere\end{proof}

\begin{corollary}
  In a (real) Hadamard matrix of order $n$ the symmetric difference of any
  two trades must have size at least $n$.
\end{corollary}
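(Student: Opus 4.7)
The plan is to reduce the corollary directly to \tref{t:real} by exhibiting $T_1 \triangle T_2$ as a trade in a suitable Hadamard matrix. First I would assume $T_1 \ne T_2$ (otherwise the symmetric difference is empty, and there is nothing to prove, or the statement is vacuous depending on whether we allow the empty trade). Let $T_1, T_2$ be distinct trades in $H$, and let $H_1, H_2$ be the Hadamard matrices obtained by switching $T_1$ and $T_2$ respectively. Because $H$ is a real Hadamard matrix, the only way to switch an entry is to negate it, so the switch on each trade is uniquely determined.

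The key observation is that $H_1$ and $H_2$ disagree on an entry of $H$ precisely when exactly one of $T_1, T_2$ contains that entry: on $T_1 \cap T_2$, both matrices flip the corresponding entry of $H$ and hence agree; on the complement of $T_1 \cup T_2$, both agree with $H$; and on $T_1 \triangle T_2$, exactly one of them flips while the other does not. Hence the set of entries where $H_1$ and $H_2$ differ is precisely $T_1 \triangle T_2$, and this set is nonempty since $T_1 \ne T_2$ forces $H_1 \ne H_2$ (again using that the switch is determined in the real case).

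It follows that $T_1 \triangle T_2$ is a trade in $H_1$, with $H_2$ as the resulting Hadamard matrix. Applying \tref{t:real} to this trade in the Hadamard matrix $H_1$ of order $n$ yields $|T_1 \triangle T_2| \ge n$, which is the claim. The only subtlety in the argument is the distinctness issue handled in the first paragraph; the rest is essentially bookkeeping with symmetric differences, so I do not expect any real obstacle.
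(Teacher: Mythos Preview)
Your proposal is correct and follows essentially the same approach as the paper: identify $T_1\triangle T_2$ with the set of entries on which $H_1$ and $H_2$ differ, then invoke \tref{t:real}. You spell out the symmetric-difference bookkeeping and the distinctness issue a bit more carefully than the paper does, but the argument is the same.
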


\begin{proof}
Suppose that $H,H_1,H_2$ are distinct (real) Hadamard matrices of order $n$.
Let $T_1$ and $T_2$ be the set of entries of $H$ which disagree
with the corresponding entries of $H_1$ and $H_2$ respectively.
The symmetric difference of $T_1$ and $T_2$ has cardinality equal
to the number of entries of $H_1$ that are different to the
corresponding entry of $H_2$. This cardinality is at least $n$,
by \tref{t:real}.
\hfill\hfill\qedhere\end{proof}

\egref{eg:Pay8} is the symmetric difference of two rectangular trades,
one $2\times 4$ and the other $4\times 2$. It shows that equality can
be achieved in the Corollary.  The example also demonstrates that
trades of minimal size need not be rectangular. In the notation of
\tref{t:real} it has $d=e=2$ and $n=8$. Another example is obtained as
follows. Let $H$ be any Hadamard matrix and $H'$ the matrix obtained
by swapping two rows of $H$, then negating one of the rows that
was swapped. Let $T$ be the trade consisting of the entries of $H$
which differ from the corresponding entry in $H'$.  It is easy to
show that $T$ has $d=n/2$, $e=1$ in the notation of \tref{t:real}.

It is also possible to have $d=e=1$. If this is the case then by
permuting and/or negating rows we obtain a Hadamard matrix $H$ for
which $H-2I$ is also Hadamard, where $I$ is the identity matrix.
However this means that
\[
HH^\top=(H-2I)(H-2I)^\top=HH^\top-2H-2H^\top+4I.
\]
Hence $H+H^\top=2I$, so $H$ is a skew-Hadamard matrix. Conversely, the
main diagonal of any skew-Hadamard matrix is a trade with $d=e=1$.

Now we consider complex Hadamard matrices. The following lemma is the
key step in our proof.  The corresponding result for real Hadamard
matrices has been obtained by Alon (cf. \cite{Jukna}, Lemma 14.6).
Alon's proof can be trivially adapted to deal with complex Hadamard
matrices.  We include our own independent proof here since we want
to extract a characterisation of cases where the bound is tight.

\begin{lemma}\label{rowbound}
Let $H$ be a complex Hadamard matrix of order $n$,
and $\cset$ a set of $b$ columns of $H$.
If $\alpha$ is a non-zero linear combination of the elements of $\cset$ then
$\alpha$ has at least $\lceil \frac{n}{b}\rceil$ non-zero entries.
\end{lemma}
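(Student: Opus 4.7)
The plan is to write $\alpha$ as $H_{B}\lambda$ for a non-zero $\lambda \in \mathbb{C}^{b}$, where $H_{B}$ is the $n\times b$ submatrix of $H$ obtained by keeping only the columns in $\cset$. The orthogonality relation $H^{\dagger}H = nI_{n}$ restricts to $H_{B}^{\dagger}H_{B}=nI_{b}$, so squared norms are easy to compute on both sides:
\[
\|\alpha\|^{2}=\lambda^{\dagger}H_{B}^{\dagger}H_{B}\lambda=n\|\lambda\|^{2}.
\]

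Next I would bound $|\alpha_{i}|$ entrywise. Since $\alpha_{i}=\sum_{j\in\cset}h_{ij}\lambda_{j}$ and every $h_{ij}$ has modulus $1$, the Cauchy--Schwarz inequality gives
\[
|\alpha_{i}|^{2}\;\leq\;\Bigl(\sum_{j\in\cset}|h_{ij}|^{2}\Bigr)\Bigl(\sum_{j\in\cset}|\lambda_{j}|^{2}\Bigr)=b\|\lambda\|^{2}
\]
for every row index $i$. Let $s$ be the number of non-zero entries of $\alpha$. Summing the previous bound over the support of $\alpha$ gives
\[
n\|\lambda\|^{2}=\|\alpha\|^{2}=\sum_{i\,:\,\alpha_{i}\ne 0}|\alpha_{i}|^{2}\;\leq\; sb\,\|\lambda\|^{2}.
\]
Since $\lambda\neq 0$ we may cancel $\|\lambda\|^{2}$ to obtain $s\geq n/b$, and because $s$ is an integer this upgrades to $s\geq\lceil n/b\rceil$, proving the lemma.

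The argument is short and the main step is really just Cauchy--Schwarz applied in the presence of unimodular entries; there is no serious obstacle. The point worth flagging for later sections of the paper is the equality analysis: equality in Cauchy--Schwarz forces the vectors $(h_{ij})_{j\in\cset}$ and $(\overline{\lambda_{j}})_{j\in\cset}$ to be proportional for every $i$ in the support of $\alpha$. Since the $h_{ij}$ are unimodular, this implies that the coefficients $|\lambda_{j}|$ are constant on $\cset$, and that the terms $h_{ij}\lambda_{j}$ share a common phase (depending on $i$) for every $i$ where $\alpha_{i}\ne 0$. This characterisation is presumably what the authors want in hand for the subsequent rectangular-trade bound.
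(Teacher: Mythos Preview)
Your proof is correct and is essentially the Alon-style norm argument the paper cites just before the lemma: use $H_{\cset}^{\dagger}H_{\cset}=nI_{b}$ to compute $\|\alpha\|^{2}$ exactly, then bound each $|\alpha_{i}|^{2}$ via Cauchy--Schwarz. The paper deliberately takes a different, row-based route: from $\langle c_{j},\alpha\rangle=0$ for $j\notin\cset$ it extracts an explicit linear relation among the first $t$ rows restricted to $\overline{\cset}$, substitutes this into $\langle r_{1},r_{2}\rangle=0$, and compares magnitudes of the two sides of the resulting identity to obtain $n-b\le(t-1)b$. Your argument is shorter and more transparent; the paper's is more hands-on and, in the equality case, yields directly that $|\langle r_{i,\cset},r_{2,\cset}\rangle|=b$ for every relevant $i$, i.e.\ pairwise collinearity of the restricted rows. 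Your Cauchy--Schwarz equality analysis recovers the same rank-one conclusion (equality forces every $(h_{ij})_{j\in\cset}$ with $i$ in the support of $\alpha$ to be a scalar multiple of $(\overline{\lambda_{j}})_{j\in\cset}$, and all $|\lambda_{j}|$ equal), so either approach feeds the proof of Theorem~\ref{t:rect} equally well---which is worth noting, since the paper motivates its alternative proof precisely by the desire to extract the equality characterisation.
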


\begin{proof}
Without loss of generality, we can write $H$ in the form
\[ H = \left( \begin{array}{ll} T & U \\ V & W \end{array} \right) \]
where $T$ contains the columns in $\cset$ and the rows in which
$\alpha$ is non-zero.  We will identify a linear dependence among the
rows of $U$, then use this and an expression for the inner product of
$r_{1}$ and $r_{2}$ to derive the required result.  We assume that
there are $t$ non-zero entries $\alpha_i$ in $\alpha$ and that if $t\ge 2$ then
they obey
$|\alpha_{2}| \geq |\alpha_{1}| \geq |\alpha_{i}|$ for $3 \leq i \leq t$. We
need to show that $t \geq \lceil \frac{n}{b}\rceil$.

For any column $c_j$ not in $\cset$, we have that $\< c_{j}, \alpha\>
= 0$ since the columns of $H$ are orthogonal. Thus every column of $U$
is orthogonal to $\alpha$, and so there exists a linear dependence
among the rows of $U$, explicitly: $h_{1j} = \sum_{i = 2}^{t}
-\alpha_{i}\alpha_{1}^{-1} h_{ij}$, for any $j \notin \cset$. In
particular, this shows that indeed $t\ge2$.

Since $H$ is Hadamard, we know that all of the $h_{ij}$ have
absolute value $1$, and that rows of $H$ are necessarily orthogonal:
\begin{eqnarray*}
\< r_{1} , r_{2} \>
& =  & \< r_{1,\cset}, r_{2,\cset} \> + \< r_{1,\ncset} , r_{2,\ncset}\> \\
& =  & \< r_{1,\cset}, r_{2,\cset} \> +
     \< \sum_{i= 2}^{t} -\alpha_{i}\alpha_{1}^{-1} r_{i,\ncset}, r_{2,\ncset} \>\\
& = & \< r_{1,\cset}, r_{2,\cset} \> +
     \sum_{i= 2}^{t}-\alpha_{i}\alpha_{1}^{-1}\< r_{i,\ncset}, r_{2, \ncset} \>.
\end{eqnarray*}
Since $\<r_{1}, r_{2}\>=0$ and
$\< r_{i,\ncset}, r_{2, \ncset} \>=-\< r_{i,\cset}, r_{2, \cset} \>$,
this means that
\begin{equation}\label{e:alp}
\alpha_{2}\alpha_{1}^{-1} \< r_{2,\ncset}, r_{2,\ncset}\>
= \< r_{1,\cset}, r_{2,\cset} \> + \sum_{i = 3}^{t} \alpha_{i}\alpha_{1}^{-1}
    \< r_{i,\cset}, r_{2,\cset}\>.
\end{equation}
Now, each inner product $\< r_{i,B}, r_{2,B}\>$ is a sum
of $b$ complex numbers of modulus one, and
$|\alpha_{i}\alpha_{1}^{-1}| \leq 1$ for $i\ge3$. So the
absolute value of the right hand side of \eref{e:alp} is at most
$(t-1)b$. In contrast, the absolute value of the left hand side of
\eref{e:alp} is $|\alpha_{2}\alpha_{1}^{-1}|(n-b)\ge n-b$.
It follows that $n-b \leq (t-1)b$,
and hence $t \geq \lceil \frac{n}{b} \rceil$.
\hfill\hfill\qedhere\end{proof}

Let $H$ be a Fourier Hadamard matrix of order $n$, and suppose that
$t \mid n$.  Then there exist $t$ rows of $H$ containing only
$t^{\rm{th}}$ roots of unity.  Their sum vanishes on all but
$\frac{n}{t}$ coordinates, so \lref{rowbound} is best possible.
On the other hand, if $H$ is Fourier of prime order $p$, the only
vanishing sum of $p^{\rm{th}}$ roots is the complete one. So in
this case, a linear combination of at most $t$ rows will contain at
most $t$ zero entries.

\begin{theorem}\label{t:rect}
  If $H$ is a complex Hadamard matrix of order $n$ containing an $a\times b$
  rectangular trade $T$ then $ab \geq n$. If $ab = n$
  then $T$ is a rank one submatrix of~$H$.
\end{theorem}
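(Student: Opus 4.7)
The plan is to apply \lref{rowbound} to a judicious linear combination of the $b$ columns of $H$ indexed by $\cset$. After permuting rows and columns, write
\[ H = \begin{pmatrix} T & U \\ V & W \end{pmatrix}, \]
where $T$ is the $a\times b$ rectangular trade and the left block-column is indexed by $\cset$. Part 2 of \lref{l:rect} tells us that each row of $T$ is Hermitian-orthogonal to each row of $V$, so the row spaces of $T$ and $V$ are orthogonal subspaces of $\C^b$. On the other hand, the $n\times b$ matrix $M$ obtained by stacking $T$ on top of $V$ is $H$ restricted to the columns in $\cset$, so (since $H$ is invertible) its columns are linearly independent and $\mathrm{rank}(M)=b$. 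Together, these two observations force $\mathrm{rank}(T)+\mathrm{rank}(V)=b$.

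Since $T$ has unimodular entries, $T\ne 0$, so $\mathrm{rank}(T)\ge 1$ and $\mathrm{rank}(V)\le b-1$; in particular $\Ker V$ is nontrivial. For any $\lambda\in\Ker V\setminus\{0\}$, set $\alpha = M\lambda$. Then $\alpha\ne 0$ (because $\Ker M = \{0\}$) and $\alpha$ is supported in the $a$ rows occupied by $T$ (because $V\lambda=0$). Viewing $\alpha$ as a nonzero linear combination of the columns in $\cset$, \lref{rowbound} gives $|\mathrm{supp}(\alpha)|\ge \lceil n/b\rceil$, and hence $a\ge \lceil n/b\rceil$, giving $ab\ge n$.

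For the equality clause, suppose $ab=n$. Then for every $\lambda\in\Ker V\setminus\{0\}$ the bound above is tight: $\alpha$ must occupy all $a$ of its allowed coordinates, so $T\lambda$ is entrywise nonzero. Because $\Ker M=\{0\}$, the map $\lambda\mapsto T\lambda$ is injective on $\Ker V$, so its image $\E\subseteq \C^a$ has dimension $\dim\Ker V = \mathrm{rank}(T)$ and every nonzero member of $\E$ has no zero coordinate. An elementary argument then pins down the rank: if $u,v\in \E$ were linearly independent, the nonzero vector $v_1 u - u_1 v$ would lie in $\E$ and have first coordinate $0$, a contradiction. Thus $\dim \E \le 1$ and $\mathrm{rank}(T)=1$.

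The principal obstacle is the equality clause. The inequality $ab\ge n$ falls out from a single application of \lref{rowbound}, but converting tightness into a rank-one conclusion requires the observation that the tight case forces every image $T\lambda$ (with $\lambda\in\Ker V$) to be entrywise nonzero, together with the elementary fact that a subspace of $\C^a$ whose nonzero vectors have no vanishing coordinates must be at most one-dimensional.
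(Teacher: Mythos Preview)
Your proof is correct. Both your argument and the paper's hinge on \lref{rowbound}, but the two proofs feed different vectors into it. The paper works on the row side: it forms the specific vector $\gamma_1-\gamma_c=(1-c)\sum_{i\le a} r_{i,\cset}$, observes it lies in the span of the first $a$ rows, and applies (the transpose of) \lref{rowbound} to bound its support by $b\ge\lceil n/a\rceil$. You instead work on the column side, using the orthogonality from \lref{l:rect} to obtain $\mathrm{rank}(T)+\mathrm{rank}(V)=b$, which guarantees $\Ker V\ne\{0\}$ and hence a nonzero column-combination supported in the first $a$ rows. The two routes are dual but not identical.

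The real divergence is in the equality clause. The paper reopens the proof of \lref{rowbound} and reads off from the tight case of \eref{e:alp} that $|\<r_{i,\cset},r_{2,\cset}\>|=b$, forcing collinearity of the rows of $T$. Your argument avoids this: you use \emph{all} of $\Ker V$ at once, note that in the tight case $T\lambda$ is entrywise nonzero for every nonzero $\lambda\in\Ker V$, and then invoke the clean linear-algebra fact that a subspace of $\C^a$ whose nonzero vectors have no vanishing coordinate is at most one-dimensional; combined with $\dim T(\Ker V)=\mathrm{rank}(T)$ this gives $\mathrm{rank}(T)=1$ directly. Your treatment of equality is self-contained and arguably more transparent, while the paper's is shorter but depends on tracking the inequalities inside the lemma.
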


\begin{proof}
Without loss of generality, $T$ lies in the first $a$ rows of $H$.
Let $B$ be the set of the columns that contain the entries of $T$.  By
hypothesis, $\gamma_{1} = \sum_{1\leq i\leq a} r_{i}$ and
$\gamma_{c}=\sum_{1\leq i\leq a}(c r_{i,\cset} + r_{i,\ncset})$ are both
orthogonal to the space $U$ spanned by the last $n-a$ rows of $H$. Now
consider $\gamma_{1} - \gamma_{c}$, which is zero in any column
outside $B$, but which is not zero since the rows of $H$ are linearly
independent. Observe that the orthogonal complement of $U$ is
$a$-dimensional, and that the initial $a$ rows of $H$ span this space:
thus $\gamma_{1} - \gamma_{c}$ is in the span of these rows,
\lref{rowbound} applies, and $ab \geq n$.

If $ab = n$ then, equality holds in calculations at the
end of the proof of \lref{rowbound}. In particular,
$|\langle r_{i,B},r_{2,B}\rangle|=b$ for each $i$, which implies
that $r_{i,B}$ is collinear to $r_{2,B}$.
Hence $T$ is a rank one submatrix of $H$.
\hfill\hfill\qedhere\end{proof}

We now give a complete characterisation of the minimal rectangular
trades in any complex Hadamard matrix.

\begin{theorem}
Let $H$ be a complex Hadamard matrix of order $n$ and
$T$ an $a\times b$ submatrix of $H$ with $ab = n$. Then $T$ is a
rectangular trade if and only if $T$ is rank~$1$.
\end{theorem}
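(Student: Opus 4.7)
The forward direction (rectangular trade implies rank one, given $ab=n$) is already part of \tref{t:rect}, so the work lies in the converse. My plan is to assume $T$ has rank one and verify the orthogonality condition in part~2 of \lref{l:rect}, which will show that $T$ can be switched by any unimodular $c\neq 1$.

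Since every entry of $H$ is unimodular, rank-one-ness of $T$ lets me factor the rows of $T$ as $r_{i,\cset}=\lambda_{i}v$ for $i\in A$, where $v$ is a fixed unimodular vector of length $b$ and each $\lambda_{i}$ is unimodular. The key device is the linear combination
\[
\gamma=\sum_{i\in A}\overline{\lambda_{i}}\,r_{i}.
\]
First I would compute $\|\gamma\|^{2}$ using orthogonality of the rows of $H$: the cross terms vanish and the diagonal terms give $\|\gamma\|^{2}=an$. Separately, $\gamma_{\cset}=\sum_{i\in A}\overline{\lambda_{i}}\lambda_{i}v=av$, so $\|\gamma_{\cset}\|^{2}=a^{2}b=an$. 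Comparing, $\gamma_{\overline{\cset}}=0$, i.e.\ $\gamma$ is supported on $\cset$.

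Now for any $r_{j}$ with $j\notin A$, orthogonality gives
\[
0=\<r_{j},\gamma\>=\<r_{j,\cset},\gamma_{\cset}\>=a\<r_{j,\cset},v\>,
\]
so $\<r_{j,\cset},v\>=0$. Multiplying through by the unimodular scalar $\lambda_{i}^{-1}$ yields $\<r_{j,\cset},r_{i,\cset}\>=0$ for every $i\in A$ and every $j\notin A$. By part~2 of \lref{l:rect}, $T$ can be switched by multiplication by any unit-modulus $c\neq 1$, so it is a rectangular trade.

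There is no real obstacle once the combination $\gamma$ is in hand; the only creative step is spotting that the weighting $\overline{\lambda_{i}}$ is precisely what concentrates $\gamma$ on $\cset$, at which point the orthogonality of the rows of $H$ does all of the remaining work. Care with the conjugate-linearity convention for $\<\cdot\,,\cdot\>$ is needed but is routine.
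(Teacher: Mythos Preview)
Your argument is correct and follows essentially the same route as the paper: form a linear combination whose $\ell_2$-norm, computed via orthogonality of the rows (the paper uses columns), forces the combination to vanish outside $\cset$, then apply \lref{l:rect}. The only cosmetic difference is that the paper first normalises $H$ so that $T$ is all ones and then takes an unweighted sum of columns, whereas you skip normalisation and instead use the weights $\overline{\lambda_i}$ on rows to achieve the same concentration.
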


\begin{proof}
\tref{t:rect} shows that any rectangular trade of size $n$ is
necessarily rank one.  So we need only prove the converse. Without
loss of generality, we assume that $T$ is contained in the first $a$
rows and first $b$ columns of $H$ and that $H$ is normalised.  Note
that this implies that $T$ is an all ones submatrix.

Consider $\gamma=(\gamma_1,\dots,\gamma_n) = \sum_{i= 1}^{b} c_{i}$,
the sum of the first $b$
columns of $H$.  It is clear that $\gamma_{j} = b$ for $j \in
\{1,\ldots,a\}$. If we show that $\gamma_{j} = 0$ for $a < j \le n$
then \lref{l:rect} will show that $T$ is a trade. We
calculate the $\ell_{2}$ norm of $\gamma$ in two ways: first, via an
expansion into orthogonal vectors:
\[ \|\gamma\|_{2}^{2}
= \langle \sum_{i = 1}^{b} c_{i}, \sum_{i = 1}^{b} c_{i}\rangle
= \sum_{i = 1}^{b}\langle c_{i}, c_{i}\rangle = bn. \]
On the other hand, $\|\gamma\|^{2}_{2} = \sum_{i= 1}^{n}|\gamma_{i}|^{2}$.
We have that $\gamma_{i} = b$ for $1\leq i\leq a$. But $ab^{2} = nb$,
so $|\gamma_{i}| = 0$ for all $i > a$.
Applying \lref{l:rect}, we are done.
\hfill\hfill\qedhere\end{proof}


\begin{corollary}
If $T$ is an $a\times b$ rank one submatrix of $H$, then $T$ is a trade if and only if
$ab = n$.
\end{corollary}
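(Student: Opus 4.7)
The plan is to establish the biconditional by treating each direction separately, and in each case to appeal to the two statements immediately preceding the corollary.

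The reverse direction, that $ab = n$ together with rank one implies $T$ is a rectangular trade, is exactly the content of the theorem immediately preceding this corollary, so no new work is required for it.

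For the forward direction, suppose $T$ is a rank-one $a\times b$ rectangular trade of $H$. Applying \tref{t:rect} yields $ab\geq n$ immediately, so the task reduces to proving the reverse inequality $ab\leq n$. I would extract this from the norm computation already appearing in the proof of the preceding theorem, observing that that computation does not actually use the trade hypothesis; it uses only that $T$ is a rank-one submatrix of $H$. After permuting rows and columns to place $T$ in the top-left corner and standard-normalising $H$, the rank-one assumption forces $T$ to be the all-ones $a\times b$ block (the first row and first column of $T$ become all ones by normalisation, and proportionality of the rows then propagates the value $1$ throughout). Then $\gamma = \sum_{j=1}^{b} c_{j}$ satisfies $\|\gamma\|_{2}^{2} = bn$ by orthogonality of the columns of $H$, while $\gamma_{i} = b$ for $1\leq i\leq a$ contributes $ab^{2}$ to this squared norm; non-negativity of the remaining $|\gamma_{i}|^{2}$ yields $ab^{2} \leq bn$, that is, $ab\leq n$.

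Combining $ab\geq n$ with $ab\leq n$ gives $ab = n$, completing the forward direction. The only delicate point, and the main thing to verify, is that the norm argument in the preceding proof really is valid under the weaker hypothesis that $T$ is merely rank one (without already assuming $ab=n$ or the trade property); I expect this to be a direct re-reading rather than a genuine obstacle, since the inequality $\|\gamma\|_{2}^{2} \geq ab^{2}$ always holds from the contribution of the first $a$ coordinates alone.
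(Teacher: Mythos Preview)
Your proof is correct. The backward direction and the inequality $ab\geq n$ are handled exactly as in the paper. The genuine difference is in how you obtain $ab\leq n$: the paper simply invokes Lindsey's Lemma (cited as Lemma~14.5 of \cite{Jukna}), which states that a rank-one submatrix of a Hadamard matrix of order~$n$ has at most~$n$ entries, whereas you recycle the norm computation from the proof of the preceding theorem, observing (correctly) that the identity $\|\gamma\|_2^2=bn$ together with $\gamma_i=b$ for $1\le i\le a$ yields $ab^2\le bn$ without any use of the trade hypothesis or of the equality $ab=n$. Your route is more self-contained and shows that Lindsey's bound is already latent in the paper's own arguments; the paper's route is a one-line citation. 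Both are valid, and your check that normalisation forces $T$ to be all ones under the rank-one hypothesis alone is sound.
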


\begin{proof}
We have that $ab \geq n$ by Theorem \ref{t:rect}. In the other direction, Lindsay's Lemma states that
the size of a rank one submatrix of a Hadamard matrix of order $n$ is bounded above by $n$ (see Lemma 14.5 of \cite{Jukna}).
\hfill\hfill\qedhere\end{proof}

Ryser's embedding problem is to establish the minimal order, $R(a,b)$,
of a Hadamard matrix containing an $a\times b$ submatrix consisting
entirely of ones.  Any rank one submatrix can be transformed into a
submatrix consisting entirely of ones by a sequence of Hadamard
equivalence operations. Hence there is a Hadamard matrix of order
$ab$ containing an $a\times b$ rectangular trade if and only if
$R(a,b)=ab$.

Newman \cite{Newman} showed that $R(a,b)=ab$
whenever both $a,b$ are orders for which Hadamard matrices exist.
Michael \cite{Michael} showed that $R(a, b) \geq (a+1)b$ for odd
$a>1$.  Thus there are no $a\times b$
rectangular trades in this case, a
conclusion that could also be reached from \tref{t:real}.
Michael also showed that if $2a$ and $b/2$ are orders of
Hadamard matrices then there exists an $a\times b$ rectangular trade
in a Hadamard matrix of order $ab$. For example, there is a Hadamard
matrix of order $48$ containing a $6\times8$ rectangular trade.

\section{Open questions}

A Bush type Hadamard matrix of order $m^2$ contains an $m\times m$
rank one submatrix. Hence there is a Hadamard matrix of order $36$
containing a $6\times 6$ rectangular trade. Thus all cases of our first
question smaller than $a=6$, $b=10$ are resolved.

\smallskip\noindent
\textbf{Question 1:} Are there even integers $a,b$ for which
there does not exist a Hadamard matrix
of order $ab$ containing an $a\times b$ rectangular trade?

\medskip

On the basis of \tref{t:real} and \tref{t:rect} we are inclined to
think that the answer to the following question is negative:

\smallskip\noindent
\textbf{Question 2:} Can there exist trades of size less than $n$ in an
$n\times n$ complex Hadamard matrix?

\medskip

It would also be nice to know how ``universal'' the rectangular trades
we have studied are. \egref{eg:Pay8} showed that combinations of
rectangular trades can create more complicated trades. By iterating
such steps can we build all trades? In other words:

\smallskip\noindent
\textbf{Question 3:} Is every trade in a (real) Hadamard matrix a
$\mathbb{Z}_2$-linear combination of rectangular trades?
If so, how does this generalise to the complex case?

\medskip

This work was motivated in part by problems in the construction of
compressed sensing matrices \cite{DarrynOC}. Optimal complex
Hadamard matrices for this application have the property that linear
combinations of $t$ rows vanish in at most $t$ components.

\smallskip\noindent
\textbf{Question 4:} Other than Fourier matrices, are their
families of Hadamard matrices with
the property that no linear combination of $t$ rows contains more than
$t$ zeros? Or, if such matrices are rare, describe families in which
no linear combination of $t$ rows contain more than $f(t)$ zeros for
some slowly growing function $f$.

\medskip

We are indebted to Prof.~Robert Craigen for our final question
and the accompanying example.

\smallskip\noindent
\textbf{Question 5:} To what extent do the results in this paper
generalise to weighing matrices (and complex weighing matrices and
their generalisations)?  In particular, is the weight of a weighing
matrix a lower bound on the size of all trades in that matrix?
Note that any weighing matrix has a trade of size
equal to its weight, simply by negating a row.
Slightly less trivially, trades with size equal to the weight
can be obtained by weaving (see \cite{Craigen})
weighing matrices. For example,
take any $2\times 2$ block of rank one in the following
$\mathrm{W}(6,4)$. The shaded entries show one such block.
\[\left(
\begin{array}{rrrrrr}
0     & 0         & \mk +        & \mk +    & +    & +    \\
0     & 0         & \mk +        & \mk +    & -    & -    \\
+     & +         & 0            & 0        & +    & -    \\
+     & +         & 0            & 0        & -    & +    \\
+     & -         & +            & -        & 0    & 0    \\
-     & +         & -            & +        & 0    & 0    \\
\end{array} \right)
\]

\end{document}